\theoremstyle{plain}
\newtheorem{prop}{Proposition}	
\newtheorem{aprop}{Proposition}	
\newtheorem{theorem}[prop]{Theorem}
\newtheorem{atheorem}[aprop]{Theorem}
\newtheorem{cor}[prop]{Corollary}
\newtheorem{lemma}[prop]{Lemma}
\newtheorem*{definition*}{Definition}
\newtheorem*{theorem*}{Theorem}
\newtheorem*{remark*}{Remark}
\newcommand{\RR}{\ensuremath{\mathbb{R}}}
\newcommand{\CC}{\ensuremath{\mathbb{C}}}
\newcommand{\UU}{\ensuremath{\mathbb{U}}}
\newcommand{\GG}{\ensuremath{\mathbb{G}}}
\newcommand{\TT}{\ensuremath{\mathbb{T}}}
\newcommand{\BB}{\ensuremath{\mathbb{B}}}
\newcommand{\NN}{\ensuremath{\mathbb{N}}}
\newcommand{\ZZ}{\ensuremath{\mathbb{Z}}}
\newcommand{\Qp}{\ensuremath{\mathbb{Q}_{p}}}
\newcommand{\HH}{\ensuremath{\mathbb{H}}}
\newcommand{\IRS}[1]{\ensuremath{\mathrm{IRS}\left( #1 \right) }}
\newcommand{\SL}[2]{\mathrm{SL}_{#1}\left(#2\right)}
\newcommand{\dd}{\ensuremath{\, \textrm{d}}}
\DeclarePairedDelimiter\abs{\lvert}{\rvert}
\DeclarePairedDelimiter\norm{\lVert}{\rVert}
\newcommand{\nrm}{\ensuremath{\vartriangleleft }}
\newcommand{\Sub}[1]{\ensuremath{\textrm{Sub}\left(#1\right)}}
\title{On  Benjamini--Schramm limits of congruence subgroups }
\author{Arie Levit}
\begin{document}

\maketitle

\begin{abstract}
A sequence of orbifolds corresponding to pairwise non-conjugate congruence lattices in a higher rank semisimple group over zero characteristic local fields is Benjamini--Schramm convergent to the universal cover.
\end{abstract}


\section{Introduction}

A \emph{semisimple analytic group} $G$ is defined as follows. 
Let $I$ be a finite index set. Assume that  $k_i$ is a zero characteristic local field 
and $\GG_i$ is a connected simply-connected $k_i$-isotropic almost $k_i$-simple linear $k_i$-group for every  $ i \in I$. Denote $G_i = \GG_i(k_i)$ so that in particular  $G_i$ is an almost simple non-compact linear group admitting a $k_i$-analytic structure.  Let $G = \prod_{i\in I}G_i$.

\begin{definition*}
	\label{def:benjamini-schramm convergence}
	A sequence of lattices $\left(\Gamma_n\right)_{n\in \NN}$ in $G$ is called weakly central\footnote{Of course, this definition makes sense for any locally compact group.}
	if for every compact subset $Q \subset G$  we have that
	$$ \eta_n \left(\{ g\Gamma_n \in G/\Gamma_n \; : \; g \Gamma_n g^{-1} \cap Q \subset Z(G)  \}\right) \xrightarrow{n\to\infty} 1$$
	where $\eta_n$ is the $G$-invariant probability measure on $G/\Gamma_n$ for each $n \in \mathbb{N}$.
\end{definition*}

This note is dedicated to establishing the following result.

\begin{theorem}
\label{thm:main theorem}
Assume that $\abs{I} \ge 2$. Then every  sequence of pairwise non-conjugate congruence lattices in $G$ is weakly central.	
\end{theorem}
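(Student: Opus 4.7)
My plan is to work through invariant random subgroups (IRS). Attach to each lattice $\Gamma_n$ the IRS $\mu_n \in \IRS{G}$ given by pushing the normalized Haar measure on $G/\Gamma_n$ forward along the orbit map $g\Gamma_n \mapsto g\Gamma_n g^{-1}$. A direct unpacking of the definition shows that weak centrality of $(\Gamma_n)$ is equivalent to weak-$*$ convergence of $\mu_n$ to an IRS concentrated on subgroups of $Z(G)$. Since $\IRS{G}$ is weak-$*$ compact and the property of being supported on subgroups of $Z(G)$ is stable under ergodic decomposition, it suffices to prove that every ergodic weak-$*$ limit $\mu$ of $(\mu_n)$ has this form.

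The key input is a Stuck--Zimmer type rigidity theorem tailored to the semisimple analytic group $G$. Since $\abs{I} \geq 2$ and each $G_i$ is an almost simple, non-compact, isotropic linear group over a zero characteristic local field, $G$ is a product of at least two commuting non-compact almost simple factors. Under these hypotheses the classification of ergodic IRS asserts that each such IRS is either supported on subgroups of $Z(G)$ (which is what we want) or is the IRS associated to a single lattice $\Lambda < G$, that is, supported on the $G$-conjugacy class of $\Lambda$. The hypothesis $\abs{I} \geq 2$ is essential at exactly this step: it provides the two commuting non-compact almost simple factors across which the underlying Howe--Moore vanishing is invoked.

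The remaining and main obstacle is to rule out lattice-type limits, and this is where the congruence hypothesis together with pairwise non-conjugacy enters. Suppose an ergodic weak-$*$ limit $\mu$ were the IRS of some lattice $\Lambda$. Then along a subsequence one would extract conjugates of $\Gamma_n$ Chabauty-accumulating on $\Lambda$, and by semicontinuity of covolume under Chabauty convergence the covolumes of $\Gamma_n$ would stay bounded. Since each $\Gamma_n$ is a \emph{congruence} lattice and hence in particular arithmetic, the Borel--Prasad finiteness theorem for arithmetic lattices of bounded covolume in $G$ leaves only finitely many possibilities up to conjugation, contradicting the pairwise non-conjugacy assumption on $(\Gamma_n)$. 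I expect the delicate point to be the honest translation between weak-$*$ convergence of $\mu_n$ and effective Chabauty approximation by discrete lattices with controlled covolume: concretely, one has to argue that atomicity of $\mu$ on the conjugacy class of $\Lambda$ forces a definite positive proportion of conjugates $g\Gamma_n g^{-1}$ to lie Chabauty-close to $\Lambda$, and then exploit that for lattices Chabauty-closeness degenerates to containment up to uniformly bounded finite index. Getting this quantitative passage to interact cleanly with the congruence/arithmetic finiteness step is, in my view, the crux of the proof.
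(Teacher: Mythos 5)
Your overall IRS framework matches the paper's, but the central step of your argument --- ``the classification of ergodic IRS asserts that each such IRS is either supported on subgroups of $Z(G)$ or is the IRS associated to a single lattice'' --- invokes precisely the theorem that is \emph{not} available here, and working around its absence is the entire point of the paper. The Stuck--Zimmer/Bader--Shalom theorem for a product of at least two simple factors requires the action to be properly ergodic, \emph{irreducible}, and \emph{not weakly amenable}; only under those hypotheses does it yield essential freeness (Theorem~\ref{thm:BS - SZ theorem using IFT}). For higher-rank simple groups one gets ``not weakly amenable'' for free from property $(T)$, but under the hypotheses of Theorem~\ref{thm:main theorem} the group $G$ need not have property $(T)$ (e.g.\ $\SL{2}{\RR}\times\SL{2}{\RR}$), and the unconditional dichotomy ``central or lattice'' for ergodic IRSs is open. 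The paper substitutes property $(\tau)$: the Selberg property (Theorem~\ref{thm:Clozel}) gives \emph{strong uniform} spectral gap over all congruence lattices, Proposition~\ref{prop:uniform spectral gap gives spectral gap in the limit} transfers this to the weak-$*$ limit $\mu$, and strong spectral gap of $\mu$ supplies both the irreducibility and (via Proposition~\ref{prop:weakly amenable has no spectral gap}) the non-weak-amenability that Bader--Shalom needs. This is where the congruence hypothesis does its real work --- not merely in a finiteness count at the end. Your reduction ``it suffices to treat ergodic limits'' has a related soft spot: a weak-$*$ limit of the $\mu_{\Gamma_n}$ need not be ergodic, and its ergodic components are not themselves limits of the $\mu_{\Gamma_n}$, so you cannot feed them back into your scheme; the paper avoids the issue because spectral gap of the limit already forces ergodicity.

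Your proposed treatment of the lattice-type alternative is also different from, and shakier than, the paper's. You want to pass from atomicity of $\mu$ on a conjugacy class to Chabauty approximation with bounded covolume and then apply Borel--Prasad finiteness; you correctly flag the quantitative passage as delicate, and indeed semicontinuity of covolume under Chabauty convergence is the wrong direction for limits of a \emph{sequence of distinct} lattices (covolumes can, and typically do, go to infinity, which is consistent with the limit being a point mass at a central subgroup --- so bounded covolume must be extracted from the assumption that the limit is lattice-type, not merely from Chabauty accumulation). The paper instead quotes the Chabauty openness result \cite[1.9]{GL}: an irreducible lattice $H$ has a neighborhood in $\Sub{G}$ consisting of its conjugates, so $\mu_H$ is isolated among the extreme points of $\IRS{G}$ and cannot be a limit of distinct $\mu_{\Gamma_n}$; Borel density and the Margulis normal subgroup theorem reduce the essentially transitive case to this situation or to a normal subgroup, and \cite[6.7]{GL} excludes non-central normal limits. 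In short: your outline would be a correct sketch for the property~$(T)$ setting of \cite{7S,GL}, but as written it does not prove the theorem in the stated generality.
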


Recall that by the celebrated theorem of Margulis every irreducible lattice in $G$ is arithmetic whenever $\abs{I} \ge 2$. A
 \emph{congruence lattice} is a particular kind of an irreducible arithmetic lattice   containing a principal congruence subgroup.  See \S\ref{sec:congruence lattices and the selberg property} for a precise definition of this notion. In particular,  whenever lattices in $G$ are known to satisfy the congruence subgroup property a stronger formulation of Theorem \ref{thm:main theorem} is made possible.



We  remark that if $\abs{I} = 1$ and $\mathrm{rank}(G) \ge 2$ then every sequence of pairwise non-conjugate  lattices is weakly central by the main results of \cite{7S,GL}. 
The recent works of Raimbault \cite{raimbault2013convergence} and Fraczyk \cite{fraczyk2016strong} establish closely related results for congruence lattices in the rank one  groups $\SL{2}{\RR}$ and $\SL{2}{\CC}$. We also mention 
\cite[\S 5]{7S} dealing with  congruence subgroups in a fixed uniform arithmetic lattice.

\subsection*{Benjamini--Schramm convergence}

The semisimple analytic group $G$ is acting by isometries on a contractible non-positively curved metric space $X$,  as follows. Let $X_i$ be the symmetric space or Bruhat-Tits building associated to $G_i$ for every $i \in I$, depending on whether $k_i$ is Archimedean or not.  Take $X = \prod_{i\in I}X_i$ equipped with the product metric.

Let $\left(\Gamma_n\right)$ be a sequence of lattices in $G$. The following geometric notion  is equivalent to saying that $(\Gamma_n)$ is weakly central. See \cite[\S 3]{GL} for more details.

\begin{definition*}
	 The orbifolds $\Gamma_n \backslash X$ Benjamini--Schramm converge to $X$ if for every radius $0 < R < \infty$ the probability that an $R$-ball in $\Gamma_n \backslash X$ with base point taken uniformly at random is contractible tends to one as $ n \to \infty$.
\end{definition*}

As an example, we provide a geometric application of Theorem \ref{thm:main theorem} to arithmetic  orbifolds, relying on the congruence subgroup property  \cite{SerCong}. 

\begin{cor}
	\label{cor:case of hyperbolic factors}
	Let $F$ be a number field with ring of integers $\mathcal{O}_F$. Assume that $F$ has $r$ real embeddings and $2s$ complex embeddings with $r + s \ge 2$. Consider the irreducible arithmetic lattice
	$$  \SL{2}{\mathcal{O}_F} \hookrightarrow \prod_{i=1}^r \SL{2}{\RR} \times \prod_{i=1}^s \SL{2}{\CC}. $$
	Let $X = (\HH^2)^r \times (\HH^3)^s$ be a  product of two and three dimensional hyperbolic spaces.  Then the orbifolds corresponding to any sequence of  distinct finite-index subgroups of   $\SL{2}{\mathcal{O}_F}$ are Benjamini-Schramm convergent to $X$. 
\end{cor}

The conclusion of Corollary \ref{cor:case of hyperbolic factors} continues to hold if one moreover varies $F$ among all number fields with the same signature.


\subsection*{On properties $(T)$ and $(\tau)$} Benjamini-Schramm convergence for lattices was first investigated in \cite{7S}. It is shown in \cite{7S} that any sequence of pairwise non-conjugate irreducible lattices in a  semisimple Lie group with high rank and property $(T)$ is weakly central.  General local fields are dealt with in \cite{GL}.  These proofs rely on property $(T)$, most crucially in order to invoke the Stuck--Zimmer theorem \cite{LIFT,SZ}.

Our approach is to make use of \emph{property $(\tau)$} instead, avoiding the Stuck--Zimmer theorem which is presently unknown in the absence of property $(T)$. More precisely, we rely on property $(\tau)$ with respect to congruence lattices. This is sometimes called the \emph{Selberg property} as it generalizes his famous theorem on congruence subgroups of the modular group. It is crucial  that Selberg's property is   uniform  with respect to all the congruence lattices inside $G$ --- see Theorem \ref{thm:Clozel}. 

In addition, we rely on topological properties  of the Chabauty space of semisimple analytic groups  recently established  by Gelander and the author \cite{GL}, thereby replacing  yet another argument of \cite{7S} which previously required property $(T)$.

\subsection*{Spectral gap and essentially free actions}

Towards proving Theorem \ref{thm:main theorem} we study a  Borel $G$-space obtained by taking a certain limit with respect to a sequence of congruence lattices. Selberg's property implies that this limiting $G$-space has spectral gap --- see \S \ref{sec:uniform spectral gap} for a discussion of this notion. The following theorem allows us to deduce that such an action is essentially free, provided that $\abs{I} \ge 2$.

\begin{theorem}
\label{thm:spectral gap action is essentially free}
Let $G$ be a product of at least two locally compact simple groups and $X$ a Borel $G$-space admitting an invariant probability measure $\mu$. Assume that $X$ is properly ergodic, irreducible and has spectral gap. Then $(X,\mu)$ is essentially free.
\end{theorem}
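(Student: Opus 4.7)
The plan is to analyze the stabilizer IRS $\mu_* = s_* \mu$, where $s \colon X \to \mathrm{Sub}(G)$, $s(x) = G_x$, is the Borel $G$-equivariant stabilizer map (with $G$ acting on $\mathrm{Sub}(G)$ by conjugation). Essential freeness of $(X,\mu)$ is equivalent to $\mu_* = \delta_{\{e\}}$; spectral gap, proper ergodicity, and irreducibility all descend through the factor map $s$ to the $G$-action on $(\mathrm{Sub}(G), \mu_*)$.

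Writing $G = \prod_{i \in I} G_i$ with $|I| \ge 2$, the first step -- which uses irreducibility but not yet spectral gap -- is to show $G_x \cap G_i = \{e\}$ almost surely. Since distinct simple factors commute, conjugation by $\prod_{j \ne i} G_j$ acts trivially on $G_i$, so the Borel map $x \mapsto G_x \cap G_i$ is $\prod_{j \ne i} G_j$-invariant. Irreducibility means each factor acts ergodically, so any subgroup containing a factor is ergodic; in particular $\prod_{j \ne i} G_j$ is ergodic (using $|I| \ge 2$). Hence $G_x \cap G_i$ is almost surely constant, equal to some $H_i^0$; and $G_i$-equivariance of the map forces $H_i^0$ to be normal in $G_i$, so by simplicity $H_i^0 \in \{\{e\}, G_i\}$. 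The alternative $H_i^0 = G_i$ would mean $G_i$ acts trivially on $X$, contradicting ergodicity.

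Applying the same ergodicity-and-simplicity argument to the Borel map $x \mapsto \overline{\pi_i(G_x)}$ shows that each projection closure is almost surely $\{e\}$ or $G_i$. The trivial alternative places $G_x$ inside a smaller product $\prod_{j \ne i} G_j$, and iterating reduces either to $G_x = \{e\}$ outright, or to a setting where $G_x$ projects densely onto every remaining factor while intersecting each of them trivially.

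The main obstacle is precisely this remaining ``twisted graph'' case, in which $G_x$ is a non-trivial closed subgroup isomorphic via each projection to a dense subgroup of the corresponding factor. Such a $G_x$ is non-discrete and cannot be a lattice in the relevant product, so $G/G_x$ carries only an infinite $G$-invariant measure; yet the orbits being $\mu$-null prevents an immediate contradiction with finiteness of $\mu$. Here the spectral gap hypothesis must enter decisively: the dense non-discrete structure of $G_x$ should allow constructing non-trivial almost-invariant vectors in the $G$-representation on $L^2_0(X, \mu)$ -- for instance by averaging a probe function along elements of $\pi_i(G_x) \subseteq G_i$ close to the identity and exploiting that these nearly fix $x$ -- contradicting the spectral gap assumption. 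Making this construction rigorous is the core technical content of the proof.
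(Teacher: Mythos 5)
Your proposal has a genuine gap, and it sits exactly at the step you defer: the ``twisted graph'' case. The preliminary reductions (almost sure triviality of $G_x \cap G_i$ and density-or-triviality of the projections $\overline{\pi_i(G_x)}$, via ergodicity of each factor and simplicity) are standard and fine, but they are the easy part of any Stuck--Zimmer-type argument. The remaining case is precisely the content of the Stuck--Zimmer/Bader--Shalom theorem, and your proposed mechanism for it cannot work as stated. Here is a concrete obstruction: take $\Gamma$ an irreducible congruence lattice in $G$ and $X = G/\Gamma$. The stabilizers are conjugates of $\Gamma$, which intersect each factor trivially and project densely into each factor, so this action survives your reductions and lands squarely in the twisted-graph case; it is irreducible, has spectral gap (Theorem \ref{thm:Clozel}), and is not essentially free --- yet it admits no almost-invariant vectors in $L^2_0$. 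So no construction of almost-invariant vectors from the dense-projection structure alone can succeed; it must use \emph{proper} ergodicity, which your sketch never invokes in the final step. There is also a local flaw in the heuristic itself: elements of $\pi_i(G_x) \subset G_i$ close to the identity are projections of elements $g \in G_x$ whose components in the other factors are uncontrolled, so they do not ``nearly fix'' $x$ in any usable sense.

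For comparison, the paper does not attempt a direct stabilizer analysis at all. It quotes the Stuck--Zimmer/Bader--Shalom theorem (Theorem \ref{thm:BS - SZ theorem using IFT}): a properly ergodic, irreducible, \emph{not weakly amenable} action of a product of at least two simple groups is essentially free. The only thing left to check is that spectral gap rules out weak amenability, which is Creutz's observation (Proposition \ref{prop:weakly amenable has no spectral gap}): a properly ergodic weakly amenable action is orbit equivalent to an action of $\ZZ$ or $\RR$, hence is not strongly ergodic, hence has no spectral gap. The theorem is thus a formal corollary of two cited results, as the paper itself emphasizes. If you want to keep your direct approach, you would in effect have to reprove Bader--Shalom (whose proof goes through the intermediate factor theorem and Margulis-style machinery, not an averaging argument); the intended proof is much shorter and routes the difficulty into the citations.
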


We do not claim any originality for Theorem \ref{thm:spectral gap action is essentially free} --- it is a formal corollary of the well-known work of Bader--Shalom \cite{BS} and the fact that an action with spectral gap is not weakly amenable \cite{C_L}. We state it here merely as an observation and in the hope that it may prove useful in other situations as well.

Notice  that for Theorem \ref{thm:spectral gap action is essentially free} to be applicable in our situation the limiting $G$-space has to be irreducible. Indeed, this follows from a key feature of Theorem \ref{thm:Clozel} establishing spectral gap with respect  to each simple  factor of $G$ individually. 

\subsection*{An application --- convergence of Plancherel measures}

Let $\nu^G$ denote the Plancherel measure on the unitary dual $\widehat{G}$ of $G$. For every uniform lattice $\Gamma$ in $G$, the quasi-regular representation $\rho_\Gamma$ of $G$ in $L^2(G / \Gamma)$  decomposes as a direct sum of irreducible representations. Every irreducible representation $\pi \in \widehat{G}$  appears in $\rho_\Gamma$ with finite multiplicity $m(\pi,\Gamma)$. The corresponding \emph{relative Plancherel measure}  is 
$$\nu_\Gamma = \frac{1}{\mathrm{vol}(G / \Gamma)} \sum_{\pi \in \widehat{G}} m(\pi, \Gamma) \delta_\pi $$

Combining Theorem \ref{thm:main theorem} with \cite[1.2]{7S} and \cite[1.3]{GL} we obtain a generalization of one of the main results of \cite{7S,GL} regarding convergence of relative Plancherel measures. 

\begin{cor}
	\label{cor:Planceherel measure}
	Assume that $\abs{I} \ge 2$. Let $\Gamma_n$ be any  sequence of pairwise non-conjugate uniformly discrete torsion-free congruence lattices in $G$. 
	Then 
	$ \nu_{\Gamma_n}(E) \xrightarrow{n \to \infty} \nu^G(E) $
	for every relatively quasi-compact  $\nu^G$-regular  subset $E \subset \widehat{G}$.
\end{cor}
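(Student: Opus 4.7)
The plan is to combine Theorem \ref{thm:main theorem} with the previously established Plancherel convergence results of \cite{7S} and \cite{GL} in a direct two-step argument. First I would apply Theorem \ref{thm:main theorem} to the sequence $(\Gamma_n)$: since the lattices are pairwise non-conjugate congruence lattices in $G$ and $\abs{I} \ge 2$, the theorem yields at once that $(\Gamma_n)$ is weakly central in $G$. In the semisimple real setting this coincides with the Farber condition used in \cite{7S}, and in the general semisimple analytic setting it matches the hypothesis used in \cite{GL}.

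Second, I would combine weak centrality with the standing hypotheses that the lattices are uniformly discrete and torsion-free. These two conditions together upgrade weak centrality to genuine Benjamini--Schramm convergence of the orbifolds $\Gamma_n \backslash X$ to the universal cover $X$ (where $X$ denotes the product of the symmetric-space factors of the Archimedean $G_i$ together with the Bruhat--Tits buildings of the non-Archimedean $G_i$), or equivalently to weak-$\ast$ convergence of the associated invariant random subgroups to the Dirac mass on the trivial subgroup. The convergence theorem \cite[Theorem 1.2]{7S} handles the semisimple real case and \cite[Theorem 1.3]{GL} handles general semisimple analytic groups over zero characteristic local fields, so between them the present product $G = \prod_{i \in I} G_i$ is covered. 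Applying the relevant theorem to our BS-convergent sequence then gives precisely that $\nu_{\Gamma_n}(E) \to \nu^G(E)$ for every relatively quasi-compact $\nu^G$-regular $E \subset \widehat{G}$, which is the conclusion of the corollary.

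Because the argument is essentially an invocation of established machinery, I do not expect a substantive obstacle. The items requiring care are bookkeeping in nature: verifying that the ``weakly central'' terminology of the present paper matches the Farber-type hypothesis invoked in the cited Plancherel convergence theorems, and confirming that those theorems as stated apply, or can be immediately assembled, for the mixed product structure $G = \prod_{i \in I} G_i$ over possibly distinct zero characteristic local fields. Both points are routine; the second is essentially built into the framework of \cite{GL}. The uniform discreteness and torsion-freeness hypotheses play no role in Theorem \ref{thm:main theorem} itself and enter only at the second step, where they are needed to ensure that the local geometry at a typical point of $\Gamma_n \backslash X$ is indistinguishable from that of $X$ on arbitrary compact scales, which is exactly what the Plancherel convergence theorems require on the spectral side.
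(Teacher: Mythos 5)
Your proposal is correct and follows exactly the route the paper takes: the paper derives the corollary by combining Theorem \ref{thm:main theorem} with \cite[1.2]{7S} and \cite[1.3]{GL}, which is precisely your two-step argument of first obtaining weak centrality and then invoking the cited Plancherel convergence theorems for uniformly discrete torsion-free Farber sequences.
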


Further applications of  Corollary \ref{cor:Planceherel measure}  to limit multiplicities formulas and normalized Betti numbers  \cite[1.3,1.4]{7S} carry over to our setting as well.

\subsection*{Acknowledgements}

I would like to thank Tsachik Gelander for his guidance and many useful discussions about the ideas of \cite{7S}. 
I would like to thank Nicolas Bergeron for bringing to my attention the important fact that Clozel's theorem \cite{clozel2003demonstration} holds uniformly with respect to all congruence lattices. 
I would like to thank Uri Bader and Alex Lubotzky for several useful discussions and remarks.

%



\section{Uniform spectral gap}
\label{sec:uniform spectral gap}

Let $G$ be a compactly generated locally compact group and $(X,\mu)$  a Borel $G$-space with an invariant probability measure. 
 Recall that the $G$-action on $X$ has \emph{spectral gap} if the Koopman representation of $G$ in the space $L^2_0(X,\mu)$ of  functions with zero integral does not almost admit  invariant vectors. 


\begin{definition*}
A sequence $(X_n,\mu_n)$ of Borel $G$-spaces with invariant probability measures has \emph{uniform spectral gap} if the natural representation of $G$ on $\oplus_{n} L^2_0(X_n,\mu_n) $  does not almost admit  invariant vectors. 
\end{definition*}



For example, if $G$ has property $(T)$ then any family of ergodic $G$-invariant probability measures has uniform spectral gap. More generally, such uniformity is useful when passing to weak-$*$ limits of probability measures on a given compact $G$-space.

\begin{prop}
\label{prop:uniform spectral gap gives spectral gap in the limit}
Let $X$ be a compact $G$-space and $\mu_n$   a sequence of invariant Borel probability measures on $X$ with uniform spectral gap.  If $\mu$ is a weak-$*$ limit of the sequence $\mu_n$ then $\mu$ has spectral gap.
\end{prop}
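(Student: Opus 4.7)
The plan is to argue by contradiction: assuming that $\mu$ lacks spectral gap, I will produce almost invariant unit vectors inside the direct sum $\bigoplus_n L^2_0(X,\mu_n)$, contradicting uniform spectral gap. Fix $Q \subset G$ compact and $\varepsilon > 0$ witnessing the uniform spectral gap of $(X,\mu_n)$. If the conclusion fails then for any $\delta > 0$ there is a unit vector $f \in L^2_0(X,\mu)$ with $\sup_{g \in Q}\|g \cdot f - f\|_{L^2(\mu)} < \varepsilon/2$.

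Next, use density of $C(X)$ in $L^2(X,\mu)$: pick $\phi \in C(X)$ with $\|\phi - f\|_{L^2(\mu)} < \delta$, and for each $n$ set $\phi_n := \phi - \int \phi\, d\mu_n \in L^2_0(X,\mu_n)$. The task is to verify that for large $n$, the vector $\phi_n$ has norm bounded away from zero and is almost $Q$-invariant in $L^2(\mu_n)$. Since constants cancel, $\|g\phi_n - \phi_n\|_{L^2(\mu_n)}^2 = \int (g\phi - \phi)^2\, d\mu_n$, and the norm itself is $\|\phi_n\|_{L^2(\mu_n)}^2 = \int \phi^2\, d\mu_n - \bigl(\int \phi\, d\mu_n\bigr)^2$. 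Applying weak-$*$ convergence to the continuous functions $\phi$ and $\phi^2$, the latter tends to $\|\phi - \int \phi\, d\mu\|_{L^2(\mu)}^2$, which is at least $(1-\delta)^2$ because $f$ has zero $\mu$-integral and subtracting the mean is the $L^2(\mu)$-projection onto $L^2_0(X,\mu)$. Similarly $\int (g\phi - \phi)^2\, d\mu_n \to \|g\phi - \phi\|_{L^2(\mu)}^2 \leq (\varepsilon/2 + 2\delta)^2$ by the triangle inequality. Choosing $\delta$ small and then $n$ large, the normalized vector $\phi_n / \|\phi_n\|_{L^2(\mu_n)}$ placed in a single slot of the direct sum is a unit vector in $\bigoplus_m L^2_0(X,\mu_m)$ whose $g$-displacement is less than $\varepsilon$ for every $g \in Q$, yielding the desired contradiction.

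The main obstacle is ensuring that the convergence $\int (g\phi - \phi)^2\, d\mu_n \to \int (g\phi - \phi)^2\, d\mu$ is \emph{uniform} in $g \in Q$; this is the one place where plain pointwise weak-$*$ convergence is not enough. I will resolve it by observing that $g \mapsto (g\phi - \phi)^2$ is continuous from $G$ to $(C(X), \|\cdot\|_\infty)$, since $\phi$ is continuous and the $G$-action on $X$ is jointly continuous. Hence the image of the compact set $Q$ under this map is a compact subset of $C(X)$, and weak-$*$ convergence of probability measures is automatically uniform on compact subsets of $C(X)$ (by a standard equicontinuity argument using a finite net), which closes the proof.
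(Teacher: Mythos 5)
Your proof is correct, and it shares the paper's basic skeleton --- approximate by continuous test functions, recenter by subtracting the $\mu_n$-mean, and pass integrals of continuous functions through the weak-$*$ limit --- but it quantifies spectral gap differently, and that changes the technical work. The paper invokes the characterization (citing \cite[G.4.2]{BdlHV}) of ``no almost invariant vectors'' as a uniform operator-norm bound $\norm{\pi_n(\nu)_{|L^2_0(X,\mu_n)}} < \beta < 1$ for a single fixed symmetric measure $\nu$ on $G$, and then shows $\norm{\pi(\nu)f} \le \beta \norm{f}$ for continuous $f$ by a one-line norm computation; averaging over $\nu$ collapses the whole compact set $Q$ into one operator, so no uniformity issue ever arises. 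You instead work directly with the definition via almost-invariant vectors, running the argument in contrapositive form, which is more elementary (no appeal to the Kesten-type norm criterion) but forces you to confront the uniformity of $\int \abs{g\phi - \phi}^2 \dd\mu_n \to \int \abs{g\phi - \phi}^2 \dd\mu$ over $g \in Q$. You correctly identify this as the one genuine obstacle and your resolution is sound: for a jointly continuous action on a compact space the orbit map $g \mapsto g\phi$ is norm-continuous into $C(X)$, so $\{\abs{g\phi-\phi}^2 : g \in Q\}$ is norm-compact, and weak-$*$ convergence of probability measures is uniform on norm-compact subsets of $C(X)$ by the finite-net argument. The remaining estimates (the lower bound $\norm{\phi_n}^2_{L^2(\mu_n)} \to \norm{\phi - \int\phi\dd\mu}^2_{L^2(\mu)} \ge (1-\delta)^2$ via the projection onto $L^2_0$, and the upper bound $\varepsilon/2 + 2\delta$ from unitarity and the triangle inequality) are all valid, and a vector supported in a single summand of $\oplus_m L^2_0(X,\mu_m)$ does indeed contradict uniform spectral gap. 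In short: your route buys independence from the operator-norm reformulation at the cost of an extra (correctly handled) equicontinuity step, while the paper's route buys brevity at the cost of citing \cite{BdlHV}.
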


Let $\pi$ and $\pi_n$ denote the Koopman representations on the  Hilbert spaces $L^2(X,\mu)$ and $L^2(X,\mu_n)$ for $n\in\NN$. Let $\norm{\cdot}$ and $\norm{\cdot}_n$ denote the  norms  coming from the Hilbert space structure on these spaces.

\begin{proof}[Proof of Proposition \ref{prop:uniform spectral gap gives spectral gap in the limit}]
Let $\nu$ be a probability measure on $G$ which is symmetric, absolutely continuous with respect to the Haar measure and such that $\mathrm{supp}(\nu * \nu)$ is a generating set for $G$. Uniform spectral gap for the measures $\mu_n$ means that   
$$\norm{\pi_n(\nu)_{|L^2_0(X,\mu_n)}}_n \le \beta $$ for some constant $0 < \beta < 1 $ and  all $n \in \NN$. This fact is established in \cite[G.4.2]{BdlHV}. 

%

We claim that $\norm{\pi(\nu)_{|L^2_0(X,\mu)}} \le  \beta$ as well. In estimating the norm of a continuous operator we may restrict attention to a dense subspace.  Consider  any non-zero continuous function  $f \in C(X) \cap L^2_0(X,\mu)$. Note that $\pi(\nu)f \in C(X) \cap L^2_0(X,\mu)$ as well. Denote
$$a_n = \int f  \dd \mu_n, \quad f = f'_n + a_n 1_X $$
so that $f'_n \in C(X) \cap L^2_0(X,\mu_n)$ for all $n\in \NN$. 

Since $\mu$ is a weak-$*$ limit of the measures $\mu_n$ we have that $\lim_n a_n = \int f \dd \mu = 0$.
To estimate the operator norm of $\pi(\nu)$ on the space $L^2_0(X,\mu)$ we calculate
\begin{multline*}
 \norm{\pi(\nu) f} = \lim_n \norm{\pi(\nu) f}_{n} \le \limsup_n \norm{\pi(\nu)f'_n}_{n} + \lim_n \abs{a_n} \le \\
\le \beta \limsup_n \norm{f'_n}_{n} \le \beta\left(\lim_n \norm{f}_{n} + \lim_n \abs{a_n}\right) = \beta\norm{f}.
\end{multline*}
Therefore the $G$-space  $(X,\mu)$ has spectral gap as well  \cite[G.4.2]{BdlHV}.
\end{proof}

Assume that $G$ splits as a direct product $G = G_1 \times \cdots \times G_k$ of $k$-many factors. It is natural to consider the restriction of the $G$-action to each factor $G_i$ individually. For instance, the $G$-action is \emph{irreducible} if each   $G_i$ is acting ergodically.
 
 \begin{definition*}
 	$(X,\mu)$ has \emph{strong  spectral gap} if the restricted action of each factor 	has spectral gap.
 	 	 	A sequence $(X_n,\mu_n)$ of Borel probability $G$-spaces has  \emph{strong uniform  spectral gap} if these restricted actions have uniform spectral gap.
 \end{definition*}
 
 Since spectral gap  implies ergodicity, strong spectral gap implies irreducibility.

\section{Congruence lattices and the Selberg property}
\label{sec:congruence lattices and the selberg property}


Let $G$ be semisimple analytic group. As in the first paragraph of the introduction $G$ is the direct product of the analytic groups $G_i = \GG_i(k_i)$ where every $\GG_i$ is a connected almost $k_i$-simple\footnote{A linear   $k$-group $\GG$ is \emph{almost $k$-simple} if $\GG$ is semisimple and   admits no proper normal $k$-subgroups of positive dimension.} linear group and $k_i$ a local field for every index  $i \in I$. 

\subsection*{Congruence lattices}

Let $F$ be an algebraic number field and $\HH$  an absolutely almost simple\footnote{A linear $k$-group $\GG$ is \emph{absolutely almost simple} if $\GG$ is almost  simple over the algebraic closure of $k$.}  linear $F$-group. Let $R  $ denote the inequivalent Archimedean valuations on the field $F$ such that $\HH(F_v)$ is non-compact for $v \in R$. Assume that there is  a finite set of valuations $S$ with  $R \subset S  $ and a bijection $\iota : I \to S$ so that $k_i \cong F_{\iota(a)}$ and $\GG_i$ is $k_i$-isomorphic to $\HH$ for all $i \in I$. In particular  we may identify $G$ with $\prod_{v \in S}\HH(F_v)$. 

Let $F(S)$ denote the ring of $S$-integers in the field $F$. The group $\HH(F(S))$ is  an irreducible lattice in $G$ by the Borel--Harish-Chandra theorem \cite[Theorem 4.8]{PR}. 
Given a non-zero ideal  $\mathfrak{a}$ in the ring $F(S)$  let $\HH(\mathfrak{a})$ denote the kernel of the natural map $\HH(F(S)) \to \HH(F(S)/\mathfrak{a})$. The subgroup $\HH(\mathfrak{a})$ is called a principal congruence subgroup.

\begin{definition*}
A \emph{congruence lattice} is  any lattice in $G$  containing some  $\HH(\mathfrak{a})$ as above.
\end{definition*}





The following is essentially a reformulation of the well-known Selberg's property.

\begin{atheorem}[Selberg's property]
\label{thm:Clozel}
Let $G$ be a semisimple analytic group. Then the family of $G$-spaces $G/\Gamma$ with normalized probability measures and $\Gamma$ ranging over the congruence lattices in $G$ has strong uniform spectral gap.
\end{atheorem}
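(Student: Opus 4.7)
The plan is to reduce the statement to a single uniform spectral gap statement for a principal family of congruence subgroups, and then invoke Clozel's theorem \cite{clozel2003demonstration} as a black box.

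First I would handle the reduction from arbitrary congruence lattices to principal ones. By the very definition recalled in \S\ref{sec:congruence lattices and the selberg property}, every congruence lattice $\Gamma$ in $G$ contains some $\HH(\mathfrak{a})$ as a finite-index subgroup, where $\mathfrak{a}$ is a non-zero ideal of $F(S)$. The natural projection $G/\HH(\mathfrak{a}) \to G/\Gamma$ is $G$-equivariant and finite-to-one, so pulling back functions yields a $G$-equivariant isometric embedding
$$ L^2_0(G/\Gamma) \hookrightarrow L^2_0(G/\HH(\mathfrak{a})). $$
Because uniform spectral gap (for each factor $G_i$ separately) only improves under such embeddings, it suffices to prove the theorem for the subfamily $\{\HH(\mathfrak{a})\}$ indexed by non-zero ideals of $F(S)$.

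Next I would pass to an adelic reformulation. Let $\mathbb{A}_F$ be the adele ring of $F$. Since $\HH$ is simply-connected, absolutely simple and $F_v$-isotropic for $v \in R \subset S$, strong approximation identifies
$$ \HH(\mathfrak{a}) \backslash G \;\cong\; \HH(F) \backslash \HH(\mathbb{A}_F) \,/\, K^S(\mathfrak{a}), $$
where $K^S(\mathfrak{a}) \subset \prod_{v \notin S} \HH(\mathcal{O}_v)$ is the open compact subgroup cut out by the congruence condition modulo $\mathfrak{a}$. Hence there is a $G$-equivariant isometric embedding
$$ L^2_0(G/\HH(\mathfrak{a})) \;\cong\; L^2_0\bigl(\HH(F)\backslash \HH(\mathbb{A}_F)\bigr)^{K^S(\mathfrak{a})} \;\hookrightarrow\; L^2_0\bigl(\HH(F)\backslash \HH(\mathbb{A}_F)\bigr). $$
The operator norm of $\pi(\nu)$ on the external Hilbert sum $\bigoplus_{\mathfrak{a}} L^2_0(G/\HH(\mathfrak{a}))$ is the supremum of the norms on the individual summands, so it is bounded above by the norm of $\pi(\nu)$ acting on the single adelic space $L^2_0(\HH(F)\backslash \HH(\mathbb{A}_F))$ restricted to $G$. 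This reduces strong uniform spectral gap for the congruence family to a single spectral gap statement for the adelic representation, restricted to each simple factor $G_i = \HH(F_{\iota(i)})$ of $G$.

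The final step is to invoke Clozel's theorem, which is precisely such a statement: the adelic $L^2_0$ representation is uniformly isolated from the trivial representation, and moreover the restriction of any non-trivial automorphic component to a single local factor $\HH(F_v)$ is uniformly bounded away from the trivial $\HH(F_v)$-representation. The main conceptual obstacle — and the key point that the author credits to Bergeron — is precisely the uniformity per factor: a priori Clozel's work concerns a fixed automorphic spectrum, but the bound it produces depends only on $\HH$ and $F$ and not on the individual congruence subgroup or local place, which is exactly what is needed to conclude strong uniform spectral gap.
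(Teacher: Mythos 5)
Your reduction from arbitrary congruence lattices to the principal ones $\HH(\mathfrak{a})$ matches the paper's preliminary remarks, and your adelic packaging of the family $\{\HH(\mathfrak{a})\}_{\mathfrak{a}}$ into the single space $L^2_0(\HH(F)\backslash\HH(\mathbb{A}_F))$ is a clean and legitimate way to obtain uniformity over the ideals $\mathfrak{a}$. But there is a genuine gap at the last step, and it is precisely the point the theorem is really about. The family in Theorem~\ref{thm:Clozel} ranges over \emph{all} congruence lattices in $G$, and a fixed semisimple analytic group $G$ arises as $\prod_{v\in S}\HH(F_v)$ for many different choices of the number field $F$ and the absolutely simple $F$-group $\HH$ (different such choices give pairwise non-commensurable congruence lattices, and the application in Proposition~\ref{prop:IRS convergence} needs to allow a sequence hopping between these commensurability classes). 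Your argument produces, for each fixed pair $(F,\HH)$, a single adelic spectral gap constant, and you explicitly accept that "the bound it produces depends only on $\HH$ and $F$." That is not sufficient: a constant depending on $F$ and $\HH$ is not uniform over the family in the statement, since $F$ and $\HH$ vary. Quoting Clozel's theorem as a black box gives exactly this insufficient, $(F,\HH)$-dependent statement.

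The paper closes this gap by a different mechanism, which your sketch omits entirely: the Burger--Sarnak/Clozel--Ullmo restriction principle (Theorem~\ref{thm:Burger-Sarnak-Clozel-Ullmo}) reduces the spectral gap for $L^2(G/\HH(\mathfrak{a}))$, restricted to a single factor $G_{\iota(v)}$, to spectral gap for congruence lattices in a small list of rank-one building blocks $\HH_1(F_v)$ ($\mathrm{SL}_2$, $\mathrm{SU}(3,F_v)$, $\mathrm{SU}(n,1)$ with $n$ bounded since $\HH_1(F_v)$ embeds in $G$). Since only finitely many such local groups occur for a given $G$, one gets a constant independent of $F$ and $\HH$; Lemma~\ref{lem:on embeddings of compact subsets} then controls the remaining dependence on how $\HH_1(F_v)$ sits inside $G_{\iota(v)}$ as the $F$-structure varies. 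If you want to salvage your adelic route, you would still have to run this restriction-to-finitely-many-building-blocks argument on top of it; the adelic reformulation by itself only handles the easy direction of the uniformity. (A second, smaller inaccuracy: the per-factor statement needed for \emph{strong} spectral gap is also obtained via Theorem~\ref{thm:Burger-Sarnak-Clozel-Ullmo} applied at a single place $v$, not directly from the adelic isolation statement.)
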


 We begin our discussion of Theorem \ref{thm:Clozel} with a few preliminary remarks.

\begin{itemize}
\item The two $G$-representations $L^2(X,\mu)$ and $L^2(X,\alpha \mu)$ are equivalent for every $\alpha > 0$, so that renormalizing a finite measure on a Borel $G$-space has no effect on spectral gap.  
\item Similarly, the two $G$-representations $L^2(G/\Gamma)$ and $L^2(G/\Gamma^g)$ are equivalent for every $g \in G$.
\item  For a pair of lattices $\Gamma, \Gamma'$ with $\Gamma \le \Gamma' $ the $G$-representation $L^2(G/\Gamma')$ is contained in $L^2(G/\Gamma)$.

\end{itemize}

In light of these remarks we may restrict our attention to the situation of the lattice $\HH(\mathfrak{a})$ inside $\prod_{v \in S}\HH(F_v)$ while making sure that the resulting strong spectral gap is independent of the field $F$, the group $\HH$ and the ideal $\mathfrak{a}$.

\subsection*{On Selberg's property}


The existence of spectral gap for congruence subgroups of $\SL{2}{\ZZ}$ regarded as lattices in $\SL{2}{\RR}$   is essentially Selberg's classical $\lambda_1 \ge \frac{3}{16}$ theorem \cite{selberg1965estimation}. The Archimedean case\footnote{Note that this case  already suffices for the purpose of our Corollary \ref{cor:case of hyperbolic factors}.} where $\HH$ is still $\mathrm{SL}_2$, $F$ is any number field and $S$ consists of infinite places is treated in \cite{vigneras1983quelques}. The remaining case of $\HH= \mathrm{SL}_2$ and $S$ an arbitrary set of places  follows from the work of Gelbart--Jacquet \cite{gelbart1978relation}.

The Burger--Sarnak method \cite{burger1991ramanujan} allows to go beyond  $\mathrm{SL} _2$. This method was extended by Clozel--Ullmo \cite{clozel2004equidistribution}, in particular covering the $p$-adic case. We refer the reader to the useful discussion on \cite[\S 4.2]{lubotzky2005property}.

\begin{atheorem}[Burger--Sarnak, Clozel--Ullmo]
\label{thm:Burger-Sarnak-Clozel-Ullmo}
Let $\HH_1$ be a semi-simple $F$-subgroup of $\HH$. Then for every valuation $v \in S$ the restriction of $L^2\left(G/\HH(\mathfrak{a})\right)$ to $\HH_1(F_v)$  is weakly contained in $$\bigoplus_{\mathfrak{a} ' \nrm F(S) } L^2\left(\prod_{v \in S}\HH_1(F_v)/\HH_1(\mathfrak{a}')\right)$$
where the direct sum is taken over all non-zero ideals in $F(S)$.
\end{atheorem}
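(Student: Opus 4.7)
The plan is to work adelically and exploit the fact that semisimple subgroups of semisimple groups produce closed orbits of finite volume on adelic quotients. First I would translate the statement into the adelic setting: by strong approximation, $L^2(G/\HH(\mathfrak{a}))$ embeds $G$-equivariantly into $L^2(\HH(F)\backslash\HH(\mathbb{A}_F)/K(\mathfrak{a}))$, where $K(\mathfrak{a})\subset\HH(\mathbb{A}_F^{(S)})$ is the compact-open subgroup at finite places outside $S$ cut out by the congruence condition $\mathfrak{a}$. This lets me think of vectors as automorphic functions on $\HH(\mathbb{A}_F)$ that are right-invariant under $K(\mathfrak{a})$.

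Next, using the general fact that the orbits of a semisimple (hence reductive with no non-trivial rational characters) $F$-subgroup $\HH_1$ on $\HH(F)\backslash\HH(\mathbb{A}_F)$ are closed and carry an $\HH_1(\mathbb{A}_F)$-invariant finite measure (this is the key application of Borel--Harish-Chandra/Borel--Prasad finiteness in the adelic picture), I would foliate $\HH(F)\backslash\HH(\mathbb{A}_F)$ by such orbits. For each representative $x\in\HH(\mathbb{A}_F)$, the orbit map
\[
\HH_1(F)\backslash \HH_1(\mathbb{A}_F)/K_x \longrightarrow \HH(F)\backslash \HH(\mathbb{A}_F)/K(\mathfrak{a}),\qquad h\mapsto hx,
\]
is an $\HH_1(\mathbb{A}_F)$-equivariant inclusion of finite-measure spaces, where $K_x = \HH_1(\mathbb{A}_F)\cap x K(\mathfrak{a})x^{-1}$ is compact-open. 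Restriction of an $L^2$ vector to such an orbit lands (by Fubini and a direct integral decomposition) in $L^2\bigl(\HH_1(F)\backslash\HH_1(\mathbb{A}_F)/K_x\bigr)$, and writing matrix coefficients of the $\HH_1(F_v)$-action as integrals over orbits would express them as convex combinations of matrix coefficients on the orbit spaces. This yields weak containment of the restriction in the direct sum of the representations on the orbit spaces.

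The remaining and most delicate step is to identify these orbit representations with the congruence quotients appearing in the statement. The compact-open subgroup $K_x$ need not equal the congruence subgroup attached to some ideal $\mathfrak{a}'$ of $F(S)$, but I would argue that it always contains such a subgroup: explicit computation shows that $\HH_1(\mathbb{A}_F)\cap xK(\mathfrak{a})x^{-1}$ contains the level-$\mathfrak{a}'$ principal congruence subgroup of $\HH_1$ for an ideal $\mathfrak{a}'$ depending on $x$ and $\mathfrak{a}$ (essentially by clearing denominators in $x$). Because enlarging the compact-open subgroup only passes to a subrepresentation, this upgrades weak containment into the direct sum indexed by $K_x$ to weak containment into the direct sum over all ideals $\mathfrak{a}'\triangleleft F(S)$, as required.

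The main obstacle I anticipate is the last identification: controlling how the twist by $x$ interacts with the integral structure so that the stabilizers $K_x$ really do sit inside genuine congruence subgroups of $\HH_1(F(S))$ in a uniform fashion, and carefully setting up the direct integral so that the weak-containment estimate is preserved. The Archimedean/$p$-adic case distinction enters only in verifying that the orbit of $\HH_1(\mathbb{A})$ through each point is closed and finite volume; for non-Archimedean places this is where the extension by Clozel--Ullmo supplies the missing input beyond the original Burger--Sarnak argument.
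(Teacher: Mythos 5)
First, a remark on the comparison itself: the paper does not prove Theorem \ref{thm:Burger-Sarnak-Clozel-Ullmo}; it is quoted from Burger--Sarnak and Clozel--Ullmo with references, so there is no internal proof to measure your argument against. Judged on its own, your proposal assembles several of the right ingredients of the Burger--Sarnak method: the adelic reformulation via strong approximation for the simply connected group $\HH$, the observation that enlarging a compact-open level structure only passes to a subrepresentation, the ``clearing denominators'' identification of stabilizers with groups containing principal congruence subgroups $\HH_1(\mathfrak{a}')$, and the passage from matrix-coefficient approximation to weak containment.

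However, the central step is wrong as stated. It is not true that the orbits of $\HH_1(\mathbb{A}_F)$ on $\HH(F)\backslash\HH(\mathbb{A}_F)$ are all closed with finite invariant measure, so there is no foliation by such orbits to which Fubini and a direct-integral decomposition could be applied. The finiteness theorem you invoke gives closedness only for the orbit of the base point: $\HH(F)\HH_1(\mathbb{A}_F)$ is closed and $\HH_1(F)\backslash\HH_1(\mathbb{A}_F)$ has finite volume. The orbit of a general coset $\HH(F)x$ under the action being restricted is $\HH(F)x\HH_1(\mathbb{A}_F)$, which for generic $x$ is dense rather than closed (this is exactly the Ratner/Mozes--Shah phenomenon), and its stabilizer $x^{-1}\HH(F)x\cap\HH_1(\mathbb{A}_F)$ is typically trivial, so there is no finite measure on it. The sets $\HH(F)\HH_1(\mathbb{A}_F)x$ that your orbit map $h\mapsto hx$ actually parametrizes are closed, being right translates of a closed set, but they are not invariant under the right $\HH_1(F_v)$-action whose restriction the theorem concerns, so they do not decompose the representation. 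The genuine Burger--Sarnak argument sidesteps this: one uses only the orbits through \emph{rational} points $\gamma\in\HH(F)$, for which $\HH(F)\gamma\HH_1(\mathbb{A}_F)=\HH(F)\left(\gamma\HH_1\gamma^{-1}\right)(\mathbb{A}_F)\,\gamma$ is closed, invariant, of finite volume, and identified with a congruence quotient of $\HH_1$; one then shows that the diagonal matrix coefficients of the restriction of a compactly supported vector are limits, uniformly on compact subsets of $\HH_1(F_v)$, of finite sums of matrix coefficients coming from these closed orbits, using that the rational points are dense in the relevant quotient (again strong approximation). Replacing your foliation by this density-plus-approximation argument is the missing idea; without it the proof does not go through.
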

	
Clozel made the final contribution towards Selberg's property by dealing with arbitrary 
absolutely simple groups. In fact Theorem \ref{thm:Clozel} is essentially equivalent to  \cite[Thm. 3.1]{clozel2003demonstration}. 
Clozel's proof for a general $F$-group $\HH$ depends on whether it is $F$-isotropic or not. If $\mathrm{rank}_F(\HH) \ge 1$ then $\HH$ is known  to admit $\mathrm{SL}_2$ as a $F$-subgroup \cite[I.1.6.3]{Ma}. In that case one may rely on \cite{gelbart1978relation} and Theorem \ref{thm:Burger-Sarnak-Clozel-Ullmo}.

The main effort of \cite{clozel2003demonstration} is in dealing with the anisotropic case, as follows. If $\HH$ is $F$-anisotropic then  it admits a $F$-subgroup $\HH_1$ with $\mathrm{rank}_{F_v}(\HH_1) = 1$ and such that
$$ \text{$\HH_1 \cong \SL{1}{D}$ \quad or \quad $\HH_1 \cong \mathrm{SU}(D,*)$} $$
where $D$ is a division algebra of degree $p^2$  over $F$ or over a quadratic extension of $F$ in the first and second cases, respectively, and $p$ is a prime \cite[1.1]{clozel2003demonstration}. 
The careful analysis of \cite[\S3.2]{clozel2003demonstration} reveals that the case of $\SL{1}{D}$ can be reduced to $\mathrm{SL}_2$. Similarly, the case of $\mathrm{SU}(D,*)$ can be reduced either to $\mathrm{SL}_2$, to $\mathrm{SU}(3,F_v)$ with $v$ finite or to $\mathrm{SU}(n,1)$ with $v$ infinite. The parameter $n$ is clearly bounded since $\HH_1(F_v)$ embeds in $G$. Clozel then establishes spectral gap directly for congruence lattices in these last two families of rank one groups, and concludes relying on Theorem \ref{thm:Burger-Sarnak-Clozel-Ullmo}.

\subsection*{Uniformity of spectral gap}

Note that while \cite[Theorem 3.1]{clozel2003demonstration} is stated with respect to a fixed algebraic number field $F$ and group $\HH$, the resulting spectral gap for the subgroup $\HH_1(F_v)$ turns out to be  independent of any such choices. 

To see this, observe that there are only finitely many possibilities for the group $\HH_1(F_v)$ and that the validity of Theorem \ref{thm:Clozel} for these implies the same for $G$. 
The fact that $\HH_1(F_v)$ regarded as a subgroup of $G$ depends on the chosen $F$-structure needs to be taken into account, relying on the above preliminary remarks and the following lemma. 

\begin{lemma}
\label{lem:on embeddings of compact subsets}
Let  $\HH$ and $\GG$ be a pair of almost $k$-simple linear groups over a local field $k$ of zero characteristic. Assume that $\mathrm{rank}_k(\HH) = 1$ and denote $H = \HH(k)$ and $G = \GG(k)$. Then for every compact subset $Q_1 \subset H$ there is a compact $Q \subset G$ such that every $k$-homomorphism $\varphi : \mathbb{H} \to \mathbb{G}$ satisfies $\varphi(Q_1)^g \subset Q$ for some $g \in G$. 
\end{lemma}
\begin{proof}
Consider   the Archimedean case first. Weil's restriction of scalars allows us to assume without loss of generality that the local field  $k$ is $\RR$.

 Let $\TT_1$ and $\TT$  be maximal $k$-split tori  in the  groups $\HH$ and $\GG$ respectively. Let $\BB$ be  a minimal parabolic $k$-subgroup  of $\GG$ containing the torus $\TT$ and with unipotent $k$-subgroup $\UU$. Since $\mathrm{rank}_k(\TT_1) = 1$  
the $k$-root system of $\HH$ relative to $\TT_1$ has type $\mathrm{A}_1$ and $\Phi_k(\HH,\TT_1) = \{\alpha, -\alpha\}$ for some $k$-character $\alpha \in X(\TT_1)$. Let $\UU_\alpha$ be the unipotent $k$-subgroup of $\HH$ corresponding to the root $\alpha$. This means that the Lie subalgebra $ \mathrm{Lie}(\UU_\alpha)$ is the weight space of $\alpha$ in the adjoint representation on the Lie algebra $ \mathrm{Lie}(\HH)$.
 
 Denote  $ T_1 = \mathbb{T}_1(k)$ and $T = \mathbb{T}(k)$. Let $K_1$ and $K$  be maximal compact subgroups of $H$ and of $G$ respectively,   admitting corresponding Cartan decompositions
$$ H = K_1 T_1 K_1 \quad \text{and} \quad G = KTK.$$

Let $\varphi : \HH\to \GG$ be any $k$-homomorphism.  We claim that there is an element $g \in G$ such that
$$  \varphi^{g}(K_1) \subset K, \quad \varphi^{g}(T_1) \subset T \quad \text{and} \quad \varphi^g(\UU_\alpha) \subset \UU.$$
To establish the claim we argue as follows. All maximal compact subgroups of $G$ are conjugate \cite[3.10]{PR}. Therefore there is an element $g_1 \in G$ such that $\varphi^{g_1}(K_1) \subset K$. Let $T'$ be any maximal $k$-split torus of $G$ containing   $\varphi^{g_1}(T_1)$. There is an element $g_2 \in K$ satisfying $(T')^{g_2} = T$, see  \cite[20.9, 24.7]{borel2012linear} or \cite[6.51]{knapp2013lie}.   The relative Weyl group of the torus $\TT$ is transitive on   the set of minimal parabolic $k$-subgroups containing $\TT$ \cite[21.3]{borel2012linear}, and every   element of this Weyl group can be realized in the maximal compact subgroup $K$  \cite[6.57]{knapp2013lie}. Hence there is an element $g_3 \in K \cap \mathrm{N}_G(T)$ such that $\varphi^{g}(\UU_\alpha) \subset \UU$ where $g = g_3 g_2 g_1$. This concludes the claim.




Let  $\lambda \in X_*(\TT_1)$ be the one-parameter $k$-subgroup of $\TT_1$ satisfying $\left<\alpha, \lambda\right> = 2$.  The $k$-split torus $\TT$ acts on the unipotent $k$-subgroup $\UU$ by conjugation. In fact $\UU$ is   $\TT$-equivariantly  $k$-isomorphic as a $k$-variety to its Lie algebra $\mathrm{Lie}(\UU)$ \cite[15.13]{borel2012linear}. 
This implies that $$|\left<\beta,\varphi^g \circ \lambda\right>| \le  2$$
for all   $k$-roots $\beta \in \Phi_{k}(\GG,\TT)$. Here $g \in G$ is an element as in the above claim.

The compactness of   $Q_1$ implies that $Q_1 \subset K_1 Q'_1 K_1$ for some   compact subset $Q'_1 \subset T_1$.  By the  upper bound on $|\left<\beta, \varphi^g \circ \lambda\right>|$ for all $\beta \in \Phi_k(\GG,\TT)$    there is a compact subset $Q'   \subset T$ depending on $Q'_1$ such that  $\varphi^g(Q'_1) \subset Q'$. The Archimedean case  of the proof follows letting $Q = K Q' K $.



We now deal with the non-Archimedean case. 
 Weil's restriction of scalars allows us to assume without loss of generality that the local field $k$ is  $\Qp$ for some prime number $p$. 

  Let the  notations $\TT_1, T_1,\UU_\alpha,  \TT, T, \BB$ and $\UU$ be exactly as in the Archimedean case. Let $K_1$ be a maximal compact subgroup of $H$ admitting a Cartan decomposition $H = K_1 T_1 K_1$.  
   Let $B$ be an Iwahori subgroup \cite{iwahori1965some} of $G$ corresponding to the minimal parabolic subgroup $\BB$.   Let $K'_0, \ldots, K'_l$ be the maximal compact subgroups of $G$ containing $B$. In particular $l = \mathrm{rank}_k(\GG)$. These are precisely the representatives of  the conjugacy classes of all maximal compact subgroups in $G$. There are corresponding Cartan decompositions $G = K'_i T K'_i$ for all $i \in \{0,\ldots,l\}$. For details see e.g. \cite[3.13, 3.14]{PR}.
   
 Let $\varphi : \HH\to \GG$ be any $k$-homomorphism.  In analogy with the Archimedean case, we claim that there is an element $g \in G$ such that
$$  \varphi^{g}(K_1) \subset \bigcup_{i=0}^l K'_i, \quad \varphi^{g}(T_1) \subset T, \quad \text{and} \quad \varphi^g(\UU_\alpha) \subset \UU.$$
To see this we argue as follows. There is an element $g_1 \in G$ such that $\varphi^{g_1}(K_1) \subset K'_i$ for some $ i \in \{0,\ldots,l\}$.  Let $T'$ be any maximal $k$-split torus of $G$ containing   $\varphi^{g_1}(T_1)$. The Bruhat--Tits building associated to $G$ is strongly transitive \cite[6.56]{abramenko2008buildings}. Therefore there is some $g_2 \in B$ such that $(T')^{g_2} = T$. In particular $(K'_i)^{g_2} = K'_j$ for some $j \in \{0,\ldots,l\}$. Every element of the restricted Weyl group of the torus $\TT$ is realized in the   maximal compact subgroup $K'_j$ \cite[p. 150]{PR}.    There is an element $g_3 \in K'_j \cap \mathrm{N}_G(T)$ satisfying $\varphi^{g}(\UU_\alpha) \subset \UU$ with $g = g_3 g_2 g_1$. The claim follows.

The final part of the proof is exactly as in the Archimedean case. We may take $K' = \bigcup_{i=0}^l K'_i$ and $Q = K' Q' K'$ for some sufficiently large compact subset $Q' \subset T$ depending on $Q_1$.
\end{proof}

\section{Weakly amenable actions and Theorem \ref{thm:spectral gap action is essentially free}}
\label{sec:weakly amenable and spectral gap}

Let $G$ be a locally compact group and $(X,\mu)$  a Borel $G$-space with an invariant probability measure. We use spectral gap to deduce  essential freeness for such an action and establish Theorem \ref{thm:spectral gap action is essentially free}.

Recall that a  $G$-space $(X,\mu)$ is \emph{weakly amenable} if the orbital equivalence relation generated by the action is amenable; see e.g.  \cite[Section 4.3]{Zi} and \cite{SZ} for details. 

%


\begin{atheorem}[Stuck--Zimmer, Bader--Shalom]
\label{thm:BS - SZ theorem using IFT}
Let $G$ be a direct product of at least two simple groups. Assume that the $G$-space $(X,\mu)$ is  properly ergodic, irreducible and not weakly amenable. Then it is $\mu$-essentially free.
\end{atheorem}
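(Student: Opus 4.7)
The plan is to analyze the stabilizer map $\sigma : X \to \Sub{G}$, $x \mapsto G_x$, which is Borel and $G$-equivariant with respect to conjugation on $\Sub{G}$. Its pushforward $\sigma_\ast \mu$ is an invariant random subgroup of $G$, and the theorem is equivalent to $\sigma_\ast\mu = \delta_{\{e\}}$. As a preliminary reduction: for each projection $p_i : G \to G_i$, the closure $\overline{p_i(G_x)}$ defines an IRS of $G_i$; combining ergodicity of each $G_i$-action (from irreducibility) with the simplicity of $G_i$ and Borel density, this closure is almost surely either $\{e\}$ or $G_i$.

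The main tool is the Bader--Shalom intermediate factor theorem. Let $(B_i,\eta_i)$ be a Poisson boundary of $G_i$ and set $(B,\eta) = (\prod_i B_i,\prod_i \eta_i)$. The IFT asserts that any $G$-equivariant intermediate factor sitting between $(X\times B,\mu\times\eta)$ and $(B,\eta)$ is of the product form $(X_0 \times B_J, \mu_0 \times \eta_J)$, where $X_0$ is a measurable factor of $X$ and $B_J = \prod_{i \in J}B_i$ for some $J \subseteq \{1,\dots,n\}$. I would apply this to the intermediate factor determined by the stabilizer field, namely the $G$-invariant sub-$\sigma$-algebra of $L^\infty(X \times B)$ consisting of functions depending on $x$ only through its $G_x$-orbit. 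Combined with the first step, the IFT forces $G_x$ to be almost surely of the form $\prod_{i \notin J} G_i$ for some $J$.

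The conclusion then comes from ruling out non-trivial $J$. If $J = \emptyset$ then $G_x = G$ on a positive measure set, contradicting proper ergodicity. If $J$ is a proper non-empty subset, the action of $G$ on $X$ factors through the action of $\prod_{i \in J} G_i$ on the boundary $B_J$; the latter is an amenable action, so by Connes--Feldman--Weiss the orbit equivalence relation on $X$ is amenable, meaning $(X,\mu)$ is weakly amenable, contradicting the hypothesis. Hence $J = \{1,\dots,n\}$, the stabilizer is trivial almost surely, and the action is essentially free.

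I expect the main obstacle to be the bridge between the IFT, which is a statement about measure-theoretic factor maps, and the pointwise stabilizer field. Verifying that the stabilizer $\sigma$-algebra corresponds to an IFT-admissible intermediate factor of $X \times B$ requires carefully assembling the chain of $G$-equivariant maps, and the step showing that the ``partial boundary'' alternative genuinely yields a weakly amenable equivalence relation is delicate. These two points are the technical heart of the Bader--Shalom approach, which is why the theorem is attributed to them rather than stated as an elementary consequence of the IFT alone.
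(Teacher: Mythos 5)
First, a point of comparison: the paper does not actually prove this theorem --- its ``proof'' is a pointer to the literature (Stuck--Zimmer for semisimple Lie groups, and Bader--Shalom for general products of locally compact groups), and the theorem is used as a black box. Your attempt to reconstruct the Bader--Shalom argument via the boundary and the intermediate factor theorem is aimed at the right circle of ideas, but as written it has several genuine gaps. The most basic one is the preliminary reduction: the theorem is stated for abstract locally compact simple groups, where there is no Borel density theorem, and even in the algebraic setting ``simplicity $+$ Borel density $+$ ergodicity'' does not give the dichotomy $\overline{p_i(G_x)} \in \{\{e\}, G_i\}$ --- the pushforward of $\mu$ under $x \mapsto \overline{p_i(G_x)}$ is just some ergodic invariant random subgroup of $G_i$, and nothing at this stage rules out, say, lattices or other closed non-normal subgroups as values. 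Ruling out such intermediate stabilizers is the content of the theorem, not an input to it.

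The IFT step is also misconfigured. The relevant intermediate factor sits between $X \times B$ and $X$ (it contains $L^\infty(X)$), not between $X\times B$ and $B$; the correct object is the fibered quotient of the boundary by the stabilizers, i.e.\ the algebra of $F \in L^\infty(X\times B)$ with $F(x,\cdot)$ a.e.\ $G_x$-invariant. Your description ``functions depending on $x$ only through its $G_x$-orbit'' does not define this (indeed $x$ is fixed by $G_x$). More seriously, the conclusion you extract --- that $G_x$ is a.s.\ equal to $\prod_{i\notin J}G_i$ --- is false: for $X = G/\Gamma$ with $\Gamma$ an irreducible lattice, the action is irreducible and not weakly amenable, the stabilizers are lattices rather than subproducts, and the action is not free. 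This is precisely the case that the hypothesis of proper ergodicity must exclude, and it never appears in your case analysis; what the IFT actually yields is information about how $G_x$ acts on $B$ (ergodically on the fibers of $B \to B_J$), from which one must still separate the ``stabilizers are lattice-like, action essentially transitive'' branch from the ``stabilizers are trivial'' branch from the ``orbit relation is amenable'' branch. Finally, the sentence ``the action of $G$ on $X$ factors through the action of $\prod_{i\in J}G_i$ on the boundary $B_J$'' cannot be right as stated, since a probability-measure-preserving action cannot factor through a boundary action; so the derivation of weak amenability in the intermediate case, which is where the hypothesis ``not weakly amenable'' must do its work, is not established. In short, the skeleton (boundary, IFT, trichotomy) matches the cited sources, but each of the three joints of the argument is either unjustified or incorrect as written.
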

\begin{proof}
See \cite{SZ} for the classical case of semisimple Lie groups, \cite{LIFT} for semisimple linear groups over local fields and \cite{BS} for general locally compact groups.
\end{proof}

We conclude that Theorem \ref{thm:spectral gap action is essentially free} follows at once by combining  Theorem \ref{thm:BS - SZ theorem using IFT} with the following observation, due to Creutz \cite{C_L}.




\begin{aprop}
\label{prop:weakly amenable has no spectral gap}
Let $G$ be a second countable locally compact group.	
If the $G$-space $(X,\mu)$ is properly ergodic and weakly amenable then it has no spectral gap.
\end{aprop}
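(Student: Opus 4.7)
The plan is to produce a sequence of unit vectors $f_n \in L^2_0(X,\mu)$ with $\|\pi(g) f_n - f_n\|_2 \to 0$ for every fixed $g \in G$, where $\pi$ denotes the Koopman representation. By a standard equivalence for continuous unitary representations of $\sigma$-compact locally compact groups (see e.g.\ \cite{BdlHV}), the existence of such a sequence is equivalent to weak containment of the trivial representation $1_G$ in $\pi|_{L^2_0(X,\mu)}$, which is precisely the failure of spectral gap.

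The construction of $f_n$ relies on realizing the amenable orbit equivalence relation $\mathcal{R}$ of the $G$-action as hyperfinite via the Connes--Feldman--Weiss theorem, in the form valid for measured equivalence relations generated by measure preserving actions of a second countable locally compact group. This yields an increasing filtration $\mathcal{R} = \bigcup_n \mathcal{R}_n$ by Borel $\mu$-preserving sub-equivalence relations each with finite classes. Proper ergodicity forces $\mu$ to be non-atomic, so the pushforward of $\mu$ to the quotient measure algebra of $\mathcal{R}_n$-saturated sets is itself non-atomic, since individual $\mathcal{R}_n$-classes are countable and hence $\mu$-null. Consequently for each $n$ one can select an $\mathcal{R}_n$-saturated Borel set $A_n \subset X$ with $\mu(A_n) = 1/2$ and set
\[
f_n := 2\bigl(\mathbf{1}_{A_n} - \tfrac{1}{2}\mathbf{1}_X\bigr) \in L^2_0(X,\mu),
\]
so that $\|f_n\|_2 = 1$.

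Because $A_n$ is $\mathcal{R}_n$-saturated, the symmetric difference $A_n \triangle g^{-1} A_n$ is contained in $E_n(g) := \{x \in X : (x, g^{-1}x) \notin \mathcal{R}_n\}$, yielding the pointwise estimate $\|\pi(g) f_n - f_n\|_2^2 = 4\mu(A_n \triangle g^{-1}A_n) \le 4\mu(E_n(g))$. Since $\mathcal{R}_n \nearrow \mathcal{R}$ and $g^{-1}x$ lies in the $\mathcal{R}$-class of $x$ for almost every $x$, dominated convergence gives $\mu(E_n(g)) \to 0$ for each fixed $g$, completing the sketch. The main obstacle is the proper formulation and proof of Connes--Feldman--Weiss for orbit relations of general locally compact group actions: the classical theorem concerns countable groups, and one must appeal to its extension to measured equivalence relations (due to Kaimanovich and Anantharaman-Delaroche--Renault), or else reduce to the case of a suitable countable subgroup with the same orbit relation.
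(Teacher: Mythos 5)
Your first paragraph is fine: for a $\sigma$-compact group, pointwise asymptotic invariance of unit vectors does give weak containment of the trivial representation (the positive definite functions $\langle \pi(g)f_n,f_n\rangle$ tend to $1$ pointwise, hence uniformly on compacta by Raikov's theorem), and for \emph{countable} $G$ your construction of the sets $A_n$ from a hyperfinite exhaustion is the standard Jones--Schmidt argument and is correct. The gap is exactly at the point you flag, and it is not repairable in the form you propose. When $G$ is non-discrete, a properly ergodic action has in general uncountable orbits, so the orbit relation $\mathcal{R}$ cannot be written as an increasing union $\bigcup_n \mathcal{R}_n$ of Borel sub-relations with finite (or even countable) classes: any such union has countable classes. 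For the same reason no countable subgroup of $G$ can have the same orbit relation. The groupoid versions of Connes--Feldman--Weiss do not produce a finite filtration of $\mathcal{R}$ itself; they only do so for the countable equivalence relation induced on a cross-section $Y \subset X$, and one must then transport the asymptotically invariant sets from $Y$ back to $X$ through the tube structure $Y \times U \hookrightarrow X$. That transport is the substantive missing step in your sketch.

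The paper's route (following Creutz) circumvents this: a weakly amenable action of a second countable locally compact group is orbit equivalent to an action of $\ZZ$ or $\RR$; a properly ergodic action of such an amenable group admits more than one invariant mean on $L^\infty(X,\mu)$, i.e.\ fails strong ergodicity; and the multiplicity of invariant means is an orbit equivalence invariant. Hence the $G$-action is not strongly ergodic, which rules out spectral gap. If you want to keep your explicit almost-invariant-vector construction, carry it out on the cross-section and then smear the resulting sets by $\mathbf{1}_U$ along the tube; otherwise quote the orbit-equivalence statement directly.
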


Roughly speaking, the proof of Proposition \ref{prop:weakly amenable has no spectral gap} is as follows. A weakly amenable action is orbit equivalent to an  action of either $\ZZ$ or $\RR$, depending on whether $G$ is countable or not. The action of these amenable groups admits more than a single invariant mean on $L^\infty(X,\mu)$. This fact is invariant under orbit equivalence, so that the action of $G$ admits more than a single  invariant mean as well. Therefore the action of $G$ is not strongly ergodic and in particular there can be  no spectral gap. 
See  \cite[7.3.1]{C_L} for complete details and  references concerning this argument.

It is interesting to note the similarity of the above theme with the role played by Selberg's theorem   in the solution of the Banach-Ruziewicz problem  \cite{lubotzky2010discrete}.

\section{Invariant random subgroups and a proof of Theorem \ref{thm:main theorem}}
\label{sec:proof of main theorem}

We now prove Theorem \ref{thm:main theorem} relying on strong uniform spectral gap for congruence lattices. 
The proof strategy is inspired by \cite{7S} and in particular invariant random subgroups\footnote{See Gelander's  lecture notes \cite{gelander2015lecture} on the Chabauty topology, invariant random subgroups and Benjamini-Schramm spaces.} play a key role. 

Associated to any second countable locally compact group $G$ is the \emph{Chabauty space}  of  closed subgroups, denoted $\Sub{G}$. This is a compact $G$-space with the Chabauty topology and the conjugation action. An \emph{invariant random subgroup} of $G$ is a $G$-invariant probability measure on $\Sub{G}$. Let $\IRS{G}$ denote the compact convex space of all invariant random subgroups of $G$ with the weak-$*$ topology. 

Associated to any normal subgroup $N \nrm G$ is the point mass $\delta_N \in \IRS{G}$.  More interestingly, to any lattice $\Gamma$ in $G$ we associate $\mu_\Gamma \in \IRS{G}$ obtained by pushing forward the $G$-invariant probability from $G/\Gamma$ to $\Sub{G}$ via the $G$-equivariant mapping
$$ G / \Gamma \to \Sub{G}, \quad g\Gamma \mapsto g\Gamma g^{-1}. $$

Let $\Gamma_n$ be a sequence of pairwise non-conjugate  lattices in $G$ and  $\mu$ any weak-$*$ accumulation point of $\mu_{\Gamma_n}$ in the space $\IRS{G}$.

\begin{prop}
\label{prop:irreducible and essentially transitive are isolated in ergodic}
If $\mu$ is irreducible and essentially transitive  then $\mu = \delta_M$ for some central subgroup $M$. 
\end{prop}

\begin{proof}
Since $(\Sub{G},\mu)$ is essentially transitive there is a closed subgroup $H \le G$  such that $\mu$ is supported on the conjugacy class of $H$ and the $G$-space $(\Sub{G},\mu)$ is isomorphic to the homogeneous $G$-space $G/N_G(H)$.  Therefore the subgroup $N_G(H)$ is of finite covolume in $G$. 

The factors $G_i$ of $G$ are non-compact and the fields $k_i$ have zero characteristic so that the  density theorem of Borel applies. As in \cite[II.6.2.(b)]{Ma} it can   used  to deduce that $N_G(H) = M \times \Gamma$ where $M\nrm G$ is a normal subgroup with  $G = M \times M'$ and   $\Gamma$ is a lattice in $M'$. Since the normal subgroup $M$ acts trivially on $G/M$   the irreducibility of the action implies  either   that     $N_G(H)$ is an irreducible lattice in $G$ or   that $N_G(H) = G$. In the second case $\mu$ is equal to $\delta_M$. 

We deal with these two possibilities separately, relying on results from \cite{GL}. 
 If $N_G(H)$ is a lattice in $G$ then $H$ must be an irreducible lattice as well by the normal subgroup theorem of Margulis \cite[IV]{Ma}. Every irreducible lattice in $G$ admits a Chabauty open neighborhood in $\Sub{G}$ consisting of conjugates \cite[1.9]{GL}, so that the corresponding point $\mu_H $ is  isolated in the space of extreme points of $\IRS{G}$.

A non-discrete normal subgroup of $G$ does not belong to the closure of the Chabauty subspace of discrete subgroups \cite[6.7]{GL}. Therefore the second case where $\mu = \delta_M$ is impossible unless $M$ is central, as required.
\end{proof}

Assume moreover that the $\Gamma_n$'s are congruence lattices.  We claim that $\mu$ is indeed equal to $ \delta_M$ for some central subgroup $M$. Theorem \ref{thm:Clozel} implies that the sequence $\mu_{\Gamma_n}$ of $G$-invariant Borel probability measures on the compact $G$-space $\Sub{G}$ has strong uniform spectral gap. By Proposition \ref{prop:uniform spectral gap gives spectral gap in the limit} the $G$-space $(\Sub{G},\mu)$ has strong spectral gap as well. In particular $\mu$ is both irreducible and has  spectral gap. 


By the argument on \cite[p. 729]{SZ} we may assume to begin with that $G$ has trivial center.  The $G$-action on $(\Sub{G},\mu)$ is certainly not essentially free. The contrapositive to Theorem \ref{thm:spectral gap action is essentially free} implies that it is not properly ergodic. Therefore the $G$ action on $(\Sub{G},\mu)$  must  be essentially transitive and Proposition \ref{prop:irreducible and essentially transitive are isolated in ergodic} applies.

%

The following three modes of convergence are all equivalent for semisimple analytic groups over local fields of zero characteristic \cite{GL} --- weak-$*$ convergence of $\mu_{\Gamma_n}$ to a central subgroup in $\IRS{G}$, Benjamini--Schramm convergence of $\Gamma_n \backslash X$ to $X$ and the fact that the sequence $\Gamma_n$ is  weakly central. 

The proof of Theorem \ref{thm:main theorem} is now complete.

\bibliography{limits}
\bibliographystyle{abbrv}

\end{document}